\newcommand{\F}{\mathbb{F}}
\newcommand{\CM}{\mathcal{M}}
\newcommand{\bfA}{\mathbf{A}}
\newcommand{\bfB}{\mathbf{B}}
\newcommand{\bfC}{\mathbf{C}}
\newcommand{\bfD}{\mathbf{D}}
\newcommand{\bfE}{\mathbf{E}}
\newcommand{\bfU}{\mathbf{U}}
\newcommand{\Tr}{\mathrm{Tr}}
\newcommand{\diag}{\mathrm{diag}}
\newcommand{\Diag}{\mathrm{Diag}}
\newcommand{\ord}{\mathrm{ord}}
\theoremstyle{plain}
\newtheorem*{thm*}{Theorem}
\newtheorem{thm}{Theorem}[section]
\newtheorem{lemma}[thm]{Lemma}
\newtheorem{prop}[thm]{Proposition}
\newtheorem{cor}[thm]{Corollary}
\newtheorem*{thm-intro}{Theorem}
\theoremstyle{definition}
\theoremstyle{remark}
\newtheorem{rem}[thm]{Remark}
\title{On the Waring Problem for Matrices over Finite Fields}
\author{Simion Breaz }
\date{}
\begin{document}

\begin{abstract}
We prove that if $k$ is a positive integer, then for every finite field $\F_q$ of cardinality $q\neq 2$ and for every positive integer $n$ such that $q^n>(k-1)^4$, every $n\times n$ matrix over $\F_q$ can be expressed as a sum of two $k$-th powers.     
\end{abstract}

\subjclass{11P05; 11T06; 11T30; 12E05; 12E20; 15A24; 15B33; 16S50}

\keywords{Waring problem, matrix over finite fields, companion matrix, $k$-power polynomial, trace}

\address{Babe\c s-Bolyai University, Faculty of Mathematics and Computer Science, Str. Mihail Kog\u alniceanu 1, 400084, Cluj-Napoca, Romania}
\email[Simion Breaz]{simion.breaz@ubbcluj.ro; bodo@math.ubbcluj.ro}

\maketitle

\section{Introduction}

Waring's Problem for matrices asks, for a given positive integer $k$, for decompositions of square matrices over a fixed ring into sums of $k$-th powers. Such decompositions are called Waring decompositions, and they have been investigated in many papers. For instance, it was proved in \cite{Ri} that if all $n\times n$ matrices are sums of $k$-th powers, $k\leq n$, then they are sums of seven $k$-th powers. These results were extended in \cite{KG}, where the existence of Waring decompositions for a matrix is characterized using the trace. A detailed study of Waring decompositions for finite rings, in particular for matrix rings over finite fields, is presented in \cite{DK}. 

For matrices over finite fields $\F_q$ (of cardinality $q$), it was observed in \cite{KiS23} that the problem of finding minimal conditions for $q$ and $k$ such that every matrix is a sum of three or two $k$-th powers corresponds to a similar question, solved in \cite{S} and \cite{LST}, for non-commutative simple groups.
Such minimal conditions are also described in \cite{Ki22}, \cite{KiS23} and \cite{KVZ23}. For instance, it was proved in \cite[Theorem 6.1]{KiS23} that for every $k$ there exists a constant $C_k\leq k^{16}$ depending only on $k$ such that if $q>C_k$ then every $n\times n$ matrix ($n\geq 2$ in \cite{KVZ23}) is a sum of two $k$-th powers. For the case $\gcd(k,q)=1$ this was improved in \cite[Corollary 1.6]{KVZ23}, where it is proved that we can take $C_k=k^3-3k^2+3k$. These results are connected to a conjecture of Larsen, \cite[Problem 1]{KVZ23}, which states that for every $k$ there exists a constant $C_k$ such that if $q^{n^2}>C_k$, then every $n\times n$ matrix is a sum of two $k$-th powers. Unfortunately, the above-mentioned results yield only lower bounds for $q$ given by polynomial expressions in $k$, involving radicals, and do not capture bounds of the form $q^{f(n)}>C_k$, where $f$ is an increasing function (for instance, affine or quadratic).   

We will prove in Theorem \ref{thm:nonscalar} that if $q\neq 2$ and we have the inequality 
\begin{equation}
\label{eq:d<general}k<\frac{q^{n/2}-\omega(n)q+q-1}{q-1} ,
\end{equation} then every non-scalar matrix is a sum of two $k$-th powers ($\omega(n)$ represents the number of prime divisors of $n$). 
In particular, we obtain in Corollary \ref{cor:main} a weak version of Larsen's conjecture: if $q^n>(k-1)^4$ then every $n\times n$ matrix over a finite field $\F_q\neq \F_2$ is a sum of two $k$-th powers. The case of matrices over $\F_2$ was investigated in \cite{KVZ23}. For instance, it is proved in \cite[Theorem 5.1]{KVZ23} that for $k$ odd, all matrices over $\F_2$ are sums of two $k$-th powers.   

%\subsection*{Notations} 
In this paper $\F_q$ will be a field with $q$ elements, and the references used here for Finite Field Theory are \cite{lidl} and \cite{Sch76}. The ring of all $n\times n$-matrices over a field $\F$ is denoted by $\CM_n(\F)$. If $P=X^n-\alpha_{n-1}X^{n-1}-\dots-\alpha_1X-\alpha_0\in \F[X]$ then we call $\alpha_{n-1}$ the \textit{trace} of $P$, and we denote it by $\Tr(P)$. We also denote by $\Tr(A)$ the trace of every square matrix.

{
For every monic polynomial $P=X^n-\alpha_{n-1}X^{n-1}-\dots-\alpha_1X-\alpha_0\in \F[X],$ there exists a \textit{companion matrix}
$$\bfC_P=\bfC_{\alpha_0,\dots,\alpha_{n-1}} = \begin{pmatrix}
0 & 0 & \cdots & 0 & \alpha_{0} \\
1 & 0 & \cdots & 0 & \alpha_{1} \\
\vdots  & \vdots  & \ddots & \vdots & \vdots  \\
0 & 0 & \cdots & 0 & \alpha_{n-2} \\
0 & 0 & \cdots & 1 & \alpha_{n-1}
\end{pmatrix},
$$
such that the characteristic polynomial of $\bfC$ is $P$. Note that the minimal polynomial of $\bfC$ is also $P$. A matrix $\bfA\in \CM_n(\F)$ is called \textit{non-derogatory} if the degree of its minimal polynomial is $n$. 
As a reminder, every matrix over a field $\F$ is similar to a \textit{Frobenius canonical form}, which is a unique block-diagonal matrix \(\text{diag}(\bfC_{P_1}, \bfC_{P_2}, \dots, \bfC_{P_s})\), where each \(\bfC_{P_i}\) is the companion matrix of a monic polynomial $P_i$ such that $P_i \mid P_{i+1}$ for all $1 \leq i < s$. These polynomials are the invariant factors of the initial matrix. In particular, from the Frobenius canonical form, it follows that every non-derogatory matrix is similar to a companion matrix. Moreover, two non-derogatory matrices are similar if and only if they have the same characteristic polynomial. 

\section{Waring decomposition for matrices}

{	
We fix a finite field $\F_q$ and a positive integer $n\geq 2$. We will work in the finite field extension $\F_{q^n}/\F_q$. Let $\phi:\F_{q^n}\to \F_{q^n}$, $\phi(x)=x^q$ the Frobenius automorphism associated to this extension. For every $x\in \F_{q^n}$ we consider the tuple of its \textit{conjugates} $\mathbf{t}=(x, \phi(x), \dots, \phi^{n-1}(x))$ with respect to this field extension. We will say that $\mathbf{t}$ is the \textit{orbit} of $x$. We will also use the polynomial $$P_{\mathbf{t}}=P_x=(X-x)(X-\phi(x))\dots(X-\phi^{n-1}(x))\in \mathbb{F}_q,$$ induced by $\mathbf{t}$. We will denote by $$\Tr_{q^n/q}:\F_{q^n}\to \F_q, \ \Tr_{q^n/q}(x)=x+\phi(x)+ \dots+ \phi^{n-1}(x),$$ the trace of $x$ with respect to the extension $\F_{q^n}/\F_q$. We remind the reader that $\Tr_{q^n/q}(x)$ coincides with the trace of the associated polynomial.

\begin{rem}\label{rem:tuple-si-urma}
Using \cite[Theorem 2.14]{lidl} and the remark after \cite[Definition 2.17]{lidl}, it follows that $f_{\mathbf{t}}$ is irreducible if and only if the orbit $\mathbf{t}$ has no repetitions. %In this case, we will say that the tuple $\mathbf{t}$ is \textit{irreducible}. 
Moreover, every monic irreducible polynomial is of the form $P_{\mathbf{t}}$, where $\mathbf{t}$ is an orbit without repetitions.  
\end{rem}

In order to construct Waring decompositions for matrices over $\F_q$, we need to identify polynomials $P$ such that the associated companion matrices $\bfC_{P}$ are $k$-th powers (we refer to  \cite{KS-25} for more details about such polynomials). We will apply the following 

\begin{lemma}\label{lem:k-power}
Let $k$ be a positive integer. If $a\in \F_{q^n}$ and the values $$a^k,\phi(a^k),\dots,\phi^{n-1}(a^k)$$ are pairwise distinct (i.e., the polynomial $P_{a^k}$ is irreducible), then the companion matrix $\bfC_{P_{a^k}}$ is a $k$-th power in $\CM_n(\F)$.    
\end{lemma}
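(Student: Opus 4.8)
The plan is to realize the companion matrix $\bfC_{\Phi_{a^k}}$ as the $\F_q$-linear operator ``multiplication by $a^k$'' on $\F_{q^n}$, regarded as an $n$-dimensional $\F_q$-vector space, and then to produce the required $k$-th root as ``multiplication by $a$''. First I fix an $\F_q$-basis of $\F_{q^n}$ and let $\bfM_a\in\CM_n(\F_q)$ denote the matrix, with respect to this basis, of the map $M_a\colon x\mapsto ax$; since $M_a$ is $\F_q$-linear, $\bfM_a$ has entries in $\F_q$. The crucial and entirely formal observation is that $M_a^k$ is the map $x\mapsto a^k x$, so that $\bfM_a^{\,k}=\bfM_{a^k}$, the matrix of multiplication by $a^k$.

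Next I identify the characteristic polynomial of $\bfM_{a^k}$. The hypothesis that $\Phi_{a^k}$ is irreducible says exactly that $a^k$ generates $\F_{q^n}$ over $\F_q$, so $\Phi_{a^k}$ is the minimal polynomial of $a^k$ and has degree $n$. For such a generator, the minimal polynomial of the multiplication operator coincides with $\Phi_{a^k}$, whence $\bfM_{a^k}$ is non-derogatory with characteristic polynomial $\Phi_{a^k}$. Thus $\bfM_{a^k}$ and $\bfC_{\Phi_{a^k}}$ are two non-derogatory matrices over $\F_q$ with the same characteristic polynomial, and so (as recalled before Lemma~\ref{lem:non-derogatory}) they are similar over $\F_q$: there is $\bfS\in\mathrm{GL}_n(\F_q)$ with $\bfC_{\Phi_{a^k}}=\bfS\,\bfM_{a^k}\,\bfS^{-1}$.

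Putting the pieces together,
$$\bfC_{\Phi_{a^k}}=\bfS\,\bfM_{a^k}\,\bfS^{-1}=\bfS\,\bfM_a^{\,k}\,\bfS^{-1}=\left(\bfS\,\bfM_a\,\bfS^{-1}\right)^{k},$$
so $\bfC_{\Phi_{a^k}}$ is the $k$-th power of $\bfS\,\bfM_a\,\bfS^{-1}\in\CM_n(\F_q)$, as required.

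The only step needing care — the main, albeit mild, obstacle — is the claim that the multiplication operator $M_{a^k}$ has characteristic polynomial exactly $\Phi_{a^k}$; this is precisely where irreducibility enters. I would justify it through the standard identification of $\F_{q^n}$ with $\F_q[X]/(\Phi_{a^k})$, under which $M_{a^k}$ becomes multiplication by $X$, whose matrix in the monomial basis $1,X,\dots,X^{n-1}$ is visibly the companion matrix $\bfC_{\Phi_{a^k}}$; irreducibility guarantees the quotient is a field of degree $n$, so this identification is legitimate. Everything else is formal manipulation of similar matrices, using only that conjugation commutes with taking powers.
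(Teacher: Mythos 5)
Your proof is correct, and it takes a genuinely different route from the paper's. The paper works over the splitting field: it observes that the hypothesis forces $a,\phi(a),\dots,\phi^{n-1}(a)$ to be pairwise distinct as well, so $\bfC_{\Phi_a}$ is diagonalizable over $\F_{q^n}$ with these eigenvalues; then $(\bfC_{\Phi_a})^k$ is diagonalizable with the pairwise distinct eigenvalues $a^k,\phi(a^k),\dots,\phi^{n-1}(a^k)$, hence non-derogatory over $\F_q$ with characteristic polynomial $\Phi_{a^k}$, hence similar (via the Frobenius normal form) to $\bfC_{\Phi_{a^k}}$, whose $k$-th root is then a conjugate of $\bfC_{\Phi_a}$. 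You instead use the regular representation: $\bfC_{\Phi_{a^k}}$ is the matrix of multiplication by $a^k$ on $\F_{q^n}$ as an $\F_q$-space (legitimate because irreducibility of $\Phi_{a^k}$ makes $a^k$ a degree-$n$ generator, so $\bfM_{a^k}$ is non-derogatory with characteristic polynomial $\Phi_{a^k}$ — or, even more directly, its matrix in the basis $1,a^k,\dots,(a^k)^{n-1}$ is literally the companion matrix), and the $k$-th root is multiplication by $a$, since $\bfM_a^{\,k}=\bfM_{a^k}$ formally. The two arguments produce essentially the same root — multiplication by $a$ is similar to $\bfC_{\Phi_a}$ here — but yours never leaves the base field and needs no diagonalization or eigenvalue bookkeeping, which makes it more elementary and self-contained; the paper's version has the advantage of exposing the eigenvalue structure explicitly, which matches the style of the surrounding lemmas (e.g.\ Lemma~\ref{lem:pair-dif} and Corollary~\ref{cor:power-k-companion}) that reason directly about the orbit $a,\phi(a),\dots,\phi^{n-1}(a)$.
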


\begin{proof}
According to our hypothesis, $\bfC_{P_{a}}$ and $\bfC_{P_{a^k}}$ are diagonalizable in $\CM_{n}(\F_{q^n})$. If $\bfU\in \mathrm{GL}_n(\F_{q^n})$ is a matrix such that $\bfU^{-1}\Diag(a,\phi(a),\dots,\phi^{n-1}(a))\bfU=\bfC_{P_a}$ then $$\bfU^{-1}\Diag(a^k,\phi(a^k),\dots,\phi^{n-1}(a^k))\bfU=(\bfC_{P_a})^k.$$ It follows that the matrix $(\bfC_{P_a})^k$ is diagonalizable in $\CM_n(\F_{q^n})$ and its eigenvalues are pairwise distinct. Using the Frobenius canonical form, it follows that $(\bfC_{P_a})^k$ is similar to the companion matrix associated to the polynomial $P_{a^k}$, which completes the proof. %The matrices $(C_{\Phi_a})^k$ and $C_{\Phi_{a^k}}$ are similar since they are non-derogatory and have the same characteristic polynomial.   
\end{proof}

\begin{cor}\label{cor:power-k-companion}
Suppose that $b\in \F_{q^n}$ is a primitive element. Then, for every positive integer $k< q^{\frac{n}{2}}+1$ the polynomial $P_{b^k}$ is irreducible, and the companion matrix $\bfC_{P_{b^k}}$ is a $k$-th power  in $\CM_n(\F)$.
\end{cor}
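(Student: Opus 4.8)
The goal is to show that for a primitive element $b \in \F_{q^n}$ and every positive integer $k < q^{n/2}+1$, the companion matrix $\bfC_{\Phi_{b^k}}$ is a $k$-th power. By Lemma \ref{lem:k-power}, it suffices to verify that the values $b^k, \phi(b^k), \dots, \phi^{n-1}(b^k)$ are pairwise distinct, equivalently that $\Phi_{b^k}$ is irreducible over $\F_q$.

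My plan is to argue by contrapositive using Lemma \ref{lem:pair-dif}. Suppose the conjugates of $a := b^k$ are not pairwise distinct. Then Lemma \ref{lem:pair-dif} provides an integer $u$ with $1 \le u \le \lfloor n/2 \rfloor$ such that $\phi^u(a) = a$, i.e. $a^{q^u - 1} = 1$. Writing $a = b^k$, this says $b^{k(q^u-1)} = 1$. Since $b$ is primitive, its multiplicative order is exactly $q^n - 1$, so this forces $q^n - 1 \mid k(q^u - 1)$.

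The key estimate is then the main point. First I would use $1 \le u \le n/2$ to bound $q^u - 1 \le q^{n/2} - 1 < q^{n/2}$, and by hypothesis $k < q^{n/2} + 1$, i.e. $k \le q^{n/2}$. Hence
\begin{equation*}
k(q^u - 1) < (q^{n/2}+1)\,q^{n/2} = q^n + q^{n/2}.
\end{equation*}
Since $k(q^u-1)$ is a positive multiple of $q^n - 1$, the smallest value it could take is $q^n - 1$ itself, and the next would be $2(q^n-1) = 2q^n - 2$. For $n \ge 2$ and $q \ge 2$ one checks $2q^n - 2 > q^n + q^{n/2}$, so the only possibility is $k(q^u - 1) = q^n - 1$. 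I would then rule this out (or rather, show it cannot cause a genuine collision): writing out $k(q^u-1) = q^n - 1 = (q^u - 1)(q^{n-u} + q^{n-2u} + \cdots)$ when $u \mid n$, the equality $k(q^u-1)=q^n-1$ forces $k = (q^n-1)/(q^u-1)$, which already exceeds $q^{n/2}$ whenever $u < n$ (and $u = n$ is excluded since $u \le n/2 < n$), giving the desired contradiction with $k \le q^{n/2}$.

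The main obstacle is the boundary case analysis: one must be careful that the strict inequality $k < q^{n/2}+1$ is exactly what is needed to exclude the single remaining equality $k(q^u-1) = q^n-1$, and that the divisibility relation $u_0 \mid n$ from Lemma \ref{lem:pair-dif} is what makes $(q^n-1)/(q^u-1)$ an integer large enough to overshoot the bound on $k$. Once the collision of conjugates is excluded, $\Phi_{b^k}$ is irreducible and Lemma \ref{lem:k-power} delivers the conclusion that $\bfC_{\Phi_{b^k}}$ is a $k$-th power.
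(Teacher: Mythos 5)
Your proposal is correct and follows essentially the same route as the paper: assume two conjugates of $b^k$ coincide, apply Lemma \ref{lem:pair-dif} to get $1\le u\le \lfloor n/2\rfloor$ with $b^{k(q^u-1)}=1$, use primitivity of $b$ to conclude $q^n-1\mid k(q^u-1)$, and contradict this divisibility with the size hypothesis on $k$. The only difference is in the endgame: the paper's estimate is tighter, since $q^u-1\le q^{n/2}-1$ gives $k(q^u-1)<(q^{n/2}+1)(q^{n/2}-1)=q^n-1$ outright and no boundary case remains, whereas your looser bound $k(q^u-1)<q^n+q^{n/2}$ forces the extra analysis of $k(q^u-1)=q^n-1$; that analysis does work, but note that your rephrasing ``$k<q^{n/2}+1$, i.e.\ $k\le q^{n/2}$'' is valid only when $n$ is even (otherwise $q^{n/2}$ is not an integer), so the clean way to finish is to read off from your own geometric sum that $k=(q^n-1)/(q^u-1)\ge q^{n-u}+1\ge q^{n/2}+1$, contradicting $k<q^{n/2}+1$ directly.
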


\begin{proof}
Suppose that $P_{b^k}$ is not irreducible. Therefore, $b^k,\phi(b^k),\dots,\phi^{n-1}(b^k)$ are not pairwise distinct. It follows that $b^k$ belongs to a proper subfield of $\F_{q^n}$. Since $b^k\notin \F_q$, there exists an integer $1< u\leq \lfloor \frac{n}{2}\rfloor$ such that $b^{k(q^u-1)}=1$ and $u\mid n$. %It follows that $n$ is not a prime. 
Since $b$ is primitive, we have $q^n-1\mid k(q^u-1)$. Using the hypothesis $k< q^{\frac{n}{2}}+1$ we obtain a contradiction. 

For the last statement, we apply Lemma \ref{lem:k-power} to conclude that the matrix $\bfC_{P_{b^k}}$ is a $k$-th power.  \end{proof}

In the following result, we will prove that under a suitable hypothesis there exist irreducible polynomials as in Lemma \ref{lem:k-power} with arbitrary traces. The existence (up to some trivial cases) of primitive polynomials with arbitrary traces was proved in \cite{Coh}. For a survey on results concerning the existence of irreducible polynomials with prescribed coefficients, we refer to \cite{Coh2}. %The existence (up to some trivial cases) of primitive polynomials with arbitrary traces was proved in \cite{Coh}.
    
\begin{prop}\label{prop:k-power-polynomial}
If $n$ and $k$ are positive integers with $\gcd(k,q)=1$ and 
\begin{equation*}
	\tag{1}		k<\frac{q^{n/2}-\omega(n)q+q-1}{q-1} \end{equation*}
then for every $a\in \F_q$ there exists an irreducible polynomial $P$ of degree $n$ such that the companion matrix $\bfC_P$ is a $k$-th power and $\Tr(P)=a$.
	\end{prop}
	
\begin{proof} 
We denote by $M_q(n)$ the cardinality of the set $$M=\{x\in \F_{q^n}\mid \textrm{the orbit of  } x \textrm{ has no repetitions}\}.$$
Let us observe that two orbit tuples without repetitions, $t_1$ and $t_2$, induce the same polynomial if and only if $t_2$ is obtained as a cyclic permutation of $t_1$. It follows that $M_q(n)=nN_q(n)$, where $N_q(n)$ is the number of monic irreducible polynomials of degree $n$ over $\mathbb{F}_q$. We apply \cite[Theorem 3.25]{lidl} to obtain 
	\begin{equation}\label{eq:nr-tuple}
		M_q(n) =\sum_{d \mid n} \mu\left(d\right) q^{\frac{n}{d}}\geq q^n - \omega(n) \cdot q^{{\lfloor n/2 \rfloor}},
	\end{equation} 
where $\mu(-)$ represents the M\"obius function, and $\omega(n)$ is the number of prime divisors of $n$.

For every $a\in \F_q$, let $N_a$ be the number of solutions for the equation $\Tr_{q^n/q}(x^k)=a$.
By Weil Theorem, \cite[Theorem 6.61]{lidl}, it follows that \begin{equation} \label{eq:N_a} \left| N_a - q^{n-1} \right|\leq \frac{1}{q} \cdot (q-1) \cdot (k-1)\sqrt{q^n} \end{equation}

Using the discussion presented in Remark \ref{rem:tuple-si-urma}, we observe that, in order to conclude that there exists a polynomial $P$ such that $\Tr(P)=a$ and $\bfC_P$ is a $k$-th power, it is enough to verify if $N_a+M_q(n)>q^n$. Under the hypothesis \eqref{eq:d<general}, this follows from \eqref{eq:nr-tuple} and \eqref{eq:N_a}.
	\end{proof}

In order to prove our main result about Waring decompositions, we will use some additive decompositions for matrices. 
Recall that a matrix is an \textit{unreduced upper Hessenberg matrix} if for every $j\in \{1,\dots,n-1\}$ we have $a_{(j+1)j}\neq 0$ and for every $i>j+1$ we have $a_{ij}=0$. For the reader's convenience, some details about the proof of the following well-known lemma are added. The second statement says that every non-derogatory matrix is similar to an unreduced Hessenberg matrix whose first $(n-1)$ columns are prescribed.

\begin{lemma}\label{lem:non-derogatory}
Let $\F$ be a field and $n$ a positive integer. 

{\rm (i)} If $\bfA=(a_{ij})\in \CM_n(\F)$ is an unreduced upper Hessenberg matrix then it is non-derogatory.

{\rm (ii)} Suppose that $\bfB\in \CM_{n}(\F)$ is a non-derogatory matrix and that we have fixed some elements $a_{ij}\in \F$ with $1\leq i\leq n$ and $1\leq j\leq n-1$ such that $a_{(j+1)j}\neq 0$ for all $j$, and for every $i>j+1$ we have $a_{ij}=0$. Then there exist $a_{1n},\dots,a_{nn}\in \F$ such that $\bfB$ is similar to the matrix $\bfA=(a_{ij})$.    
\end{lemma}

\begin{proof}
(i) If $e_1=(1,0,\dots,0)^t$ and $c_j(\bfA)$ represents the $j$-th column of $\bfA$ then $(e_1,c_1(\bfA),\dots,c_{n-1}(\bfA))$ is a basis for $\F^n$. Using this basis, we observe that $\bfA$ is similar to a companion matrix.

(ii) For simplicity, we can assume that $\bfB$ is a companion matrix. It follows that the vectors 
$$f_1=e_1, f_2=a_{21}^{-1}(Bf_1-a_{11}f_1), f_3=a_{32}^{-1}(Bf_2-a_{12}f_1-a_{22}f_2), \dots$$
form a basis of $\F^n$. Using this, we transform $\bfB$ into a matrix $\bfA=(a_{ij})$ which has the desired form.
\end{proof}

{In the next lemma we will prove that every matrix over a field with at least three elements can be decomposed as a sum of two matrices with prescribed characteristic polynomials. There are several other results regarding such decompositions. We refer the reader to \cite{BM} and \cite{DGG} for recent developments in this area.    

\begin{lemma}\label{lem:desc-princ-2}
	Let $\F\neq \F_2$ be a field, and $\bfA\in\CM_n(\F)$ a non-scalar matrix. Suppose that $P\in \F[X]$ is a polynomial of degree $n-1$ and $Q\in \F[X]$ is of degree $n$. If $t=\Tr(\bfA)-\Tr(P)-\Tr(Q),$ there exist two (non-derogatory) matrices $\bfD\in \CM_{n-1}(\F)$ and $\bfE\in \CM_{n}(\F)$ with the minimal polynomials $P$ and $Q$, respectively, such that $\bfA$ is similar to a matrix of the form
	\begin{equation}\label{eq:matrix-dec}
    \left(\begin{array}{cc}
		\bfD &  \begin{array}{c}
			d_0  \\
			\vdots \\
			d_{n-1}
		\end{array} \\
		\begin{array}{ccc}
			0 & \dots & 0     
		\end{array}   & t 
	\end{array}\right)
	+\bfE.\end{equation}
\end{lemma}

\begin{proof}
	We can assume that $\bfA$ is in Frobenius canonical form $\diag(\bfC_1,\dots, \bfC_s)$, where $\bfC_i\in \CM_{n_i}(\F)$ are companion matrices, $1\leq n_1\leq \dots\leq n_s$, and $n_s>1$. 
	
	Since $\F\neq \F_2$, it follows from Lemma \ref{lem:non-derogatory} that there exists an unreduced upper Hessenberg matrix $\bfD=(d_{ij})$ such that it is similar to $\bfC_P$, and $d_{(j+1)j}- 1_{\F}\neq 0$ for all $j\leq n-1$. Since $n_s>1$, the coefficients of matrix $\bfA-\Diag(\bfD,t)$ satisfy the condition (ii) from Lemma \ref{lem:non-derogatory}. Therefore, there exists a non-derogatory matrix $\bfE\in \CM_n(\F)$ such that it is similar to $\bfC_Q$, and its first $(n-1)$ columns coincide with the corresponding columns of $\bfA-\Diag(\bfD,t)$. 
Since $\bfA$ is equal to the sum described in \eqref{eq:matrix-dec}, the proof is complete.       
\end{proof}

\begin{rem}
We note that the only restriction, other than the degrees, on the selection of the polynomials $P$, $Q$, and the element $t$ required to obtain, up to a similarity, a decomposition as \eqref{eq:matrix-dec} is $t=\Tr(\bfA)-\Tr(P)-\Tr(Q).$    \end{rem}

{ 
\begin{lemma}\label{lem:diag(C^k,t^k)}
Let $\bfB\in \CM_{n-1}(\F)$ and $k$ a positive integer. If $b\neq 0$ is an element of $\F$ such that $b^k$ is not an eigenvalue for $\bfB^k$ then every matrix of the form
$$\left(\begin{array}{cc}
  \bfB^k &  \begin{array}{c}
         d_0  \\
         \vdots \\
         d_{n-1}
    \end{array} \\
  \begin{array}{ccc}
      0 & \dots & 0     
  \end{array}   & b^k 
\end{array}\right)$$ 
is a $k$-th power.
\end{lemma}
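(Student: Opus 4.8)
The plan is to exhibit an explicit $k$-th root of the target matrix $\bfM=\left(\begin{smallmatrix}\bfD^k & \mathbf{d}\\ \mathbf{0} & t^k\end{smallmatrix}\right)$ having the same block-triangular shape. I would search for a root of the form $\bfB=\left(\begin{smallmatrix}\bfD & \mathbf{v}\\ \mathbf{0} & t\end{smallmatrix}\right)$, where $\mathbf{v}$ is a column vector to be determined. The diagonal blocks are forced, since any $k$-th power of such a $\bfB$ has diagonal blocks $\bfD^k$ and $t^k$; thus the whole problem is concentrated in matching the single off-diagonal column.

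A straightforward induction on $m$ gives $\bfB^m=\left(\begin{smallmatrix}\bfD^m & f_m(\bfD)\mathbf{v}\\ \mathbf{0} & t^m\end{smallmatrix}\right)$ with $f_m(X)=\sum_{i=0}^{m-1}t^{m-1-i}X^i$, the inductive step resting on the identity $X^m+t\,f_m(X)=f_{m+1}(X)$ together with the fact that $t$ is scalar. Writing $f=f_k$, we have $f(X)=\frac{X^k-t^k}{X-t}$, and the equation $\bfB^k=\bfM$ collapses to the single linear system $f(\bfD)\mathbf{v}=\mathbf{d}$. This is solvable for every right-hand side $\mathbf{d}$ exactly when $f(\bfD)\in\CM_{n-1}(\F)$ is invertible, so the whole lemma reduces to establishing that invertibility.

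The key step, and the only real obstacle, is to derive the invertibility of $f(\bfD)$ from the hypothesis that $t^k$ is not an eigenvalue of $\bfD^k$. Over an algebraic closure, $f(\bfD)$ is invertible if and only if $f(\lambda)\neq 0$ for every eigenvalue $\lambda$ of $\bfD$. Since the eigenvalues of $\bfD^k$ are precisely the powers $\lambda^k$, the hypothesis reads $\lambda^k\neq t^k$ for every eigenvalue $\lambda$ of $\bfD$. In particular $\lambda\neq t$, which disposes of the degenerate case (where one would instead have $f(t)=kt^{k-1}$), and then $f(\lambda)=\frac{\lambda^k-t^k}{\lambda-t}\neq 0$. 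Hence $f(\bfD)$ is invertible; taking $\mathbf{v}=f(\bfD)^{-1}\mathbf{d}$ yields $\bfB^k=\bfM$, so $\bfM$ is a $k$-th power. I expect the one point needing care to be exactly this: recognizing that the single hypothesis simultaneously excludes $\lambda^k=t^k$ and the coincidence $\lambda=t$, after which only the routine block computation remains.
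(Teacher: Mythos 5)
Your proof is correct and takes essentially the same approach as the paper: the same ansatz of a block upper-triangular root with diagonal blocks $\bfD$ and $t$, reducing everything to the invertibility of $f_k(\bfD)=\bfD^{k-1}+t\bfD^{k-2}+\dots+t^{k-2}\bfD+t^{k-1}I_{n-1}$, which the hypothesis on $t^k$ guarantees. The only difference is that you spell out the spectral-mapping argument (that $\lambda^k\neq t^k$ for every eigenvalue $\lambda$ of $\bfD$ forces $f_k(\lambda)\neq 0$, including ruling out $\lambda=t$), a step the paper asserts without proof.
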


\begin{proof}
We consider the equation $$\left(\begin{array}{cc}
  \bfB &  \begin{array}{c}
         x_0  \\
         \vdots \\
         x_{n-1}
    \end{array} \\
  \begin{array}{ccc}
      0 & \dots & 0     
  \end{array}   & b 
\end{array}\right)^k=\left(\begin{array}{cc}
  \bfB^k &  \begin{array}{c}
         d_0  \\
         \vdots \\
         d_{n-1}
    \end{array} \\
  \begin{array}{ccc}
      0 & \dots & 0     
  \end{array}   & b^k 
\end{array}\right).$$
Since $b^k$ is not an eigenvalue for $\bfB^k$, the matrix 
$\bfB^{k-1}+b\bfB^{k-2}+\dots +b^{k-2}\bfB+b^{k-1}I_{n-1}$ is invertible, hence the equation has a solution. 
\end{proof}

}

The following lemma can be deduced from \cite[Corollary 1]{Da05}. Here, $\mathrm{GL}_n(\F_q)$ denotes the general linear group of degree $n$ over $\F_q$.

\begin{lemma}\label{lem:Da05}
If $P\in \F_q[X]$ is an irreducible polynomial of degree $n$ then $\ord_{\mathrm{GL}_n(\F_q)}\bfC_P$ divides $q^n-1$.    
\end{lemma}
}

We are ready to prove the main result of this note.

\begin{thm}\label{thm:nonscalar}
If $n$ and $k$ are positive integers which satisfy the condition \eqref{eq:d<general} then every non-scalar $n\times n$ matrix over $\F_q$ is a sum of two $k$-powers.
\end{thm}

\begin{proof}
Let $\bfA\in \CM_n(\F_q)$ be a non-scalar matrix.

From \eqref{eq:d<general} it follows that $k<q^{(n-1)/2}+1$, hence we can apply Lemma \ref{cor:power-k-companion} to construct a $k$-th power matrix $\bfD\in \CM_{n-1}(\F_q)$ whose characteristic polynomial is irreducible. We observe that we can take this matrix to be of the same form as in the proof of Lemma \ref{lem:desc-princ-2}. Moreover, we consider an element $t\in \F_q$ whics is $k$-th power. 

Since $k\geq 1$, the condition \eqref{eq:d<general} implies $n\geq 3$, hence $t$ is not an eigenvalue for $\bfD$. It follows from Lemma \ref{lem:diag(C^k,t^k)} that the matrices of the form $$\left(\begin{array}{cc}
  \bfD &  \begin{array}{c}
         x_0  \\
         \vdots \\
         x_{n-1}
    \end{array} \\
  \begin{array}{ccc}
      0 & \dots & 0     
  \end{array}   & t 
\end{array}\right)$$ are $k$-th powers.

We write $k=p^a k'$, with $\gcd(k',q)=1$. Since $n$ and $k'$ satisfy the condition \eqref{eq:d<general}, we can apply Proposition \ref{prop:k-power-polynomial} to observe that there exists an irreducible polynomial $Q$ such that $\Tr(Q)=\Tr(\bfA)-\Tr(\bfD)-t$ and the associated companion matrix $\bfC_Q$ is a $k'$-th power. If $\bfC_Q=(\bfE')^{k'}$, it  follows from Lemma \ref{lem:Da05} that the order of $\bfE'$ in $\mathrm{GL}_n(\F_q)$ is coprime with $p$. Then there exists a matrix $\bfE$ such that $\bfE'=\bfE^{p^a}$, hence $\bfC_Q=\bfE^k$. Since every matrix $\bfE\in \CM_n(\F_q)$ with the characteristic polynomial $Q$ is similar to $\bfC_Q$, it follows that we can apply Lemma \ref{lem:desc-princ-2} to obtain the conclusion.    
\end{proof}
}

\begin{rem}
 a) If $n$ is a prime, the condition \eqref{eq:d<general} can be written as 
		\begin{equation}
			\label{eq:d-prim<general}k<\frac{q^{n/2}-1}{q-1} \end{equation}

 b) If $n\geq 4$ then the condition \eqref{eq:d<general} can be replaced with
		\begin{equation}
			\label{eq:d-simplu}
			k\leq q^{\frac{n}{2}-1}.
		\end{equation}	 
In order to see that the condition \eqref{eq:d-simplu} implies \eqref{eq:d<general}, it is enough to consider the following cases: 
		
		\noindent \textbf{case 1: $\omega(n)=1$.} In this case, we observe that it is enough to have 
		$$k\leq \frac{q^{n/2}-1}{q-1}.$$
		
		\noindent \textbf{case 2: $\omega(n)=2$.} The condition \eqref{eq:d<general} becomes  
		$$k\leq \frac{q^{n/2}-q-1}{q-1},$$ which is satisfied if \eqref{eq:d-simplu} is true.

		\noindent \textbf{case 3: $\omega(n)\geq 3$.} In this case $n\geq 30$. Since $\omega(n)<\log_2(n)$, it is easy to observe that the condition \eqref{eq:d-simplu} is enough in order to obtain the conclusion. 
\end{rem}

In the following, we will include the scalar matrices in our results. Let us remind that $\F_{q^n}$ can be viewed as an $\F_q$-algebra $$\F_q[\bfC]=\{\alpha_0+\alpha_1\bfC+\dots+\alpha_{n-1}\bfC^{n-1}\mid \alpha_0,\dots,\alpha_{n-1}\in \F_q\},$$ 
where $\bfC\in \CM_n(\F_q)$ is a matrix whose characteristic polynomial is irreducible. Therefore, in order to decompose a non-zero scalar matrix from $\CM_n(\F_q)$ as a sum of two $k$-th powers, it is enough to construct such a decomposition in $\F_q[\bfC]$. 

\begin{lemma}\label{lem:scalar}
	Let $\F_q$ be a finite field of cardinality $q$. If $k$ and $n$ are positive integers and $q^n> (k-1)^4$ then 
 every $n\times n$ scalar matrix is a sum of two $k$-th powers.
\end{lemma}

\begin{proof}
	 From \cite[Corollary 6.12 (a)]{Sm-book} it follows that every non-zero element of $\F_q[\bfC]$ is a sum of two $k$-th powers. In particular, every scalar $n\times n$ matrix is a sum of two $k$-th powers.  
\end{proof}

\begin{rem}
In fact, the bound $q^n> (k-1)^4$ can be replaced by a weaker condition, see \cite[Corollary 6.12]{Sm-book} or \cite[Example 6.38]{lidl}. 
\end{rem}

Now, using Theorem \ref{thm:nonscalar} and Lemma \ref{lem:scalar}, we obtain the following

\begin{cor}\label{cor:main}
Let $F_q$ be a finite field of cardinality $q$. If $n$ and $q$ are positive integers such that $q^n> (k-1)^4$ then every $n\times n$ matrix is a sum of two $k$-th powers. 
\end{cor}

The following corollary can be of independent interest.

\begin{cor}\label{cor:main<q}
If $\F_q$ is a finite field of cardinality $q$ and $n\geq 4$ is an integer, then for every $k\leq q$, every $n\times n$ matrix is a sum of two $k$-th powers. 	
\end{cor}

}

\subsection*{Acknowledgements} I would like to thank George C\u at\u alin \c Turca\c s for fruitful discussions about this subject.

This work was supported by a grant of the Ministry of Research, Innovation and Digitization, CNCS - UEFISCDI, project number PN-IV-P1-PCE-2023-0060, within PNCDI IV.

%\subsection*{Conflict of interest} There is no conflict of interest.


\begin{thebibliography}{99}
%\bibitem{Brs20} M. Bre\v sar, Commutators and images of noncommutative polynomials, Adv. in Math.
%374 (2020), Article no. 107346.

%\bibitem{BrsS23} M. Bre\v sar, P. \v Semrl, The Waring problem for matrix algebras II, Bull. Lond. Math. Soc 55 (2023), 1880--1889.

\bibitem{BM} S. Breaz and G. C. Modoi, \textit{Nil-clean companion matrices}, Linear Algebra Appl. 489 (2016), 50--60.

\bibitem{Coh2} S. D. Cohen, \textit{Explicit theorems on generator polynomials}, Finite  Fields Appl. 11 (2005), 337--357.

\bibitem{Coh} S. D. Cohen, \textit{Primitive elements and polynomials with arbitrary trace}, Discr. Math. 83
(1990), 1--7.

\bibitem{DGG} P. Danchev, E. Garc\'\i a and M. G\'omez Lozano, \textit{On prescribed characteristic polynomials}, Linear Algebra Appl. 702 (2024), 1--18.

\bibitem{Da05} M.R. Darafsheh, \textit{Order of elements in the groups related to the general linear group}, Finite Fields Appl. 11 (2005), 738--747.

\bibitem{DK} Y. Demiro\u glu Karabulut, \textit{Waring's problem in finite rings}, J. Pure Appl. Algebra 223 (2019), 3318--3329.

%\bibitem{Fil} P.A. Fillmore, On similarity and the diagonal of a matrix, Amer. Math. Monthly 76 (1969) 167--169.

\bibitem{KG}  S. A. Katre, A. S. Garge, \textit{Matrices over commutative rings as sums of $k$-th powers}, Proc. Amer. Math.
Soc. 141 (2013), 103--113.



\bibitem{Ki22} K. Kishore, \textit{Matrix Waring problem}, Linear Algebra Appl. 646 (2022), 84--94.

\bibitem{KiS23} K. Kishore, A. Singh, \textit{Matrix Waring problem II},  Isr. J. Math. 267 (2025), 301--320.

\bibitem{KVZ23} K. Kishore, A. Vasiu, S. Zhan, \textit{Waring Problem for Matrices over Finite Fields}, J. Pure and Appl. Alg. 228 (2024), 107656.

\bibitem{KS-25}  R. Kundu, A. Singh, \textit{Generating functions for the powers in $\mathrm{GL}(n,q)$}, Isr. J. 
 Math. 259 (2024), 887--936.

\bibitem{LST} M. Larsen, A. Shalev, P.H. Tiep, \textit{The Waring problem for finite simple groups}, Ann. of Math. (2) 174 (2011), 1885--1950.

\bibitem{lidl} R. Lidl, H. Niederreiter, 
Finite fields. 2nd ed., Encyclopedia of Mathematics and Its Applications 20, Cambridge University Press, Cambridge, 1996.

%\bibitem{PV} R.A. Podesta, D.E. Videl\'a, The Waring’s problem over finite fields through generalized Paley graphs. Discrete Math. 344 (2021) 112324.

\bibitem{Ri} D. R. Richman, \textit{The Waring problem for matrices}, Linear and Multilinear Algebra, 22 (1987), 171--192.

\bibitem{Sch76} W. Schmidt, Equations over Finite Fields: An Elementary Approach, Lecture
Notes in Mathematics 536, Springer, Berlin, 1976.

\bibitem{S} A. Shalev, \textit{Word maps, conjugacy classes, and a noncommutative Waring-type theorem}, Ann. of
 Math. (2) 170 (2009), 1383--1416.

\bibitem{Sm-book} C. Small, 
Arithmetic of finite fields. 
Pure and Applied Mathematics (Marcel Dekker). 148, 1991.

%\bibitem{Sm77} C. Small, Sums of powers in large finite fields, Proc. Amer. Math. Soc. 65 (1977), 35–36.

%\bibitem{Tao} T. Tao. \textit{The divisor bound}, blog 2008, https://terrytao.wordpress.com/2008/09/23/the-divisor-bound/

%\bibitem{Tor38} L. Tornheim, Sums of n-th powers in fields of prime characteristic,
%Duke Math. J. 4 (1938), 359-–362.
\end{thebibliography}
\end{document}